\documentclass[a4paper,fleqn]{cas-sc}

\usepackage[authoryear,longnamesfirst]{natbib}
\usepackage{amsmath,amssymb,amsthm}
\usepackage{mathrsfs}

\newtheorem{theorem}{Theorem}
\newtheorem{lemma}[theorem]{Lemma}
\newtheorem{proposition}[theorem]{Proposition}

\theoremstyle{definition}
\newtheorem{remark}{Remark}

\begin{document}
\let\WriteBookmarks\relax
\def\floatpagepagefraction{1}
\def\textpagefraction{.001}

\shorttitle{Rigidity of Complete Self-Shrinkers}
\shortauthors{S.~Guo}

\title[mode=title]{Complete Self-Shrinkers in Arbitrary Codimension with Positive Constant Scalar Curvature and \(S\le 1\)}

\tnotemark[1]
\tnotetext[1]{This work was supported by the National Natural Science Foundation of China (Grant No.~12261105).}

\author[1]{Shunzi Guo}
\cormark[1]
\ead{guoshunzi@yeah.net}

\affiliation[1]{organization={School of Mathematics, Yunnan Normal University},
                city={Kunming},
                postcode={650500},
                country={China}}

\cortext[1]{Corresponding author}

\begin{abstract}
Let \(X:M^n\to\mathbb R^{n+p}\) be an \(n\)-dimensional complete self-shrinker in arbitrary codimension \(p\ge 1\). Suppose that the scalar curvature \(R\) is a positive constant and that the squared norm \(S\) of the second fundamental form satisfies \(S\le 1\). We prove that \(S\equiv 1\) and that \(X\) is isometric to either the round sphere \(S^n(\sqrt n)\) or the standard generalized cylinder \(S^k(\sqrt k)\times\mathbb R^{n-k}\), \(2\le k\le n-1\). In particular, no new higher-codimension examples occur under these assumptions. The proof uses an algebraic lemma which yields a uniform positive lower bound for the Bakry--\'Emery Ricci curvature, together with the comparison theorem of Wei--Wylie, the equivalence of finite Gaussian volume and polynomial volume growth due to Cheng--Zhou, and the gap theorem of Cao--Li.
\end{abstract}

\begin{keywords}
Self-shrinker \sep scalar curvature \sep Bakry--\'Emery Ricci curvature \sep polynomial volume growth
\end{keywords}

\maketitle
\section{Introduction}\label{sec1}

The analysis of singularities is a central problem in the study of the mean curvature flow. By Huisken's monotonicity formula \citep{Hu}, a solution to the flow is asymptotically self-similar near a given type I singularity, and is therefore modeled by self-shrinking solutions of the flow. An \(n\)-dimensional submanifold \(X:M^n\to\mathbb R^{n+p}\) in the \((n+p)\)-dimensional Euclidean space is called a \emph{self-shrinker} if it satisfies
\begin{equation}\label{selfshrinker}
H=-X^\perp,
\end{equation}
where \(H\) and \(X^\perp\) denote the mean curvature vector and the normal component of the position vector \(X\), respectively. Throughout this paper, self-shrinkers are assumed to be smooth, complete, and immersed, without any embedding or polynomial volume growth assumption unless explicitly stated.

The classification of self-shrinkers with constant scalar curvature is a natural counterpart of the Chern problem for compact minimal hypersurfaces in the unit sphere. In codimension one, this problem has been studied for nearly a decade. Guo \citep{Guo} first treated the compact case: a compact self-shrinker in \(\mathbb R^{n+1}\) with constant scalar curvature is isometric to the round sphere \(S^n(\sqrt n)\). In 2023, Luo, Sun and Yin \citep{LSY} obtained a complete classification of codimension one complete self-shrinkers with nonnegative constant scalar curvature under the assumption of polynomial volume growth. In the positive case, they showed that the self-shrinker is isometric to a cylinder \(S^m(\sqrt m)\times\mathbb R^{n-m}\) (\(2\le m\le n-1\)) or the round sphere \(S^n(\sqrt n)\); in the zero case, they showed that it is isometric to \(\Gamma\times\mathbb R^{n-1}\), where \(\Gamma\) is a complete self-shrinker in \(\mathbb R^2\). They also classified translating solitons and self-expanders with nonnegative constant scalar curvature. More recently, Cheng, Li and Wei \citep{CLW1,CLW2} removed the polynomial volume growth assumption in the positive case. They first proved that a positive constant scalar curvature forces \(0<R\le n-1\) and \(S\le 1\), where \(S\) denotes the squared norm of the second fundamental form, and then resolved the residual case
\[
R>0,\qquad S<1,\qquad \sup_M S=1
\]
by using the Bakry--\'Emery Ricci curvature. As a consequence, they showed that a complete self-shrinker in codimension one with positive constant scalar curvature must be isometric to
\[
S^n(\sqrt n)
\quad\text{or}\quad
S^k(\sqrt k)\times\mathbb R^{n-k},
\qquad 2\le k\le n-1,
\]
without any a priori volume growth assumption.

The purpose of this paper is to extend the above classification to arbitrary codimension. Our main theorem is the following.

\begin{theorem}\label{main}
Let \(X:M^n\to\mathbb R^{n+p}\) be an \(n\)-dimensional complete self-shrinker, \(n\ge 2\), \(p\ge 1\). Suppose that the scalar curvature \(R\) is a positive constant and that
\[
S\le 1.
\]
Then \(S\equiv 1\) and \(X\) is isometric to either
\[
S^n(\sqrt n)
\]
or
\[
S^k(\sqrt k)\times\mathbb R^{n-k},
\qquad 2\le k\le n-1.
\]
In particular, no higher-codimension examples occur under the assumptions \(R>0\) constant and \(S\le 1\).
\end{theorem}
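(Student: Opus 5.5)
The plan is to show that \(M\) has polynomial volume growth and then to apply the gap theorem of Cao--Li. That theorem says a complete self-shrinker with polynomial volume growth and \(S\le 1\) either has \(S\equiv 1\) or is a hyperplane. In the first case it is \(S^k(\sqrt k)\times\mathbb R^{n-k}\), \(0\le k\le n\), sitting in an \((n+1)\)-dimensional subspace.

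\textbf{Step 1 (Gauss equation and the Bakry--\'Emery tensor).} I use the conventions \(h^\alpha_{ij}\) for the second fundamental form, \(H=\sum_\alpha H^\alpha e_\alpha\) and \(|H|^2=\sum_\alpha (H^\alpha)^2\). The Gauss equation gives \(R=|H|^2-S\), so \(|H|^2=R+S\le R+1\), and
\[
\mathrm{Ric}_{ij}=\sum_\alpha H^\alpha h^\alpha_{ij}-\sum_{\alpha,k}h^\alpha_{ik}h^\alpha_{kj}.
\]
With \(f=|X|^2/4\) we have \(\nabla^2 f=\tfrac12 g+\tfrac12\langle X^\perp, h\rangle=\tfrac12 g-\tfrac12\sum_\alpha H^\alpha h^\alpha\), using \(H=-X^\perp\). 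Hence
\[
\mathrm{Ric}_f=\tfrac12 g-\sum_{\alpha,k}h^\alpha_{ik}h^\alpha_{kj}+\tfrac12\sum_\alpha H^\alpha h^\alpha_{ij}.
\]
For a unit vector \(v\), the middle term is bounded below by \(-\sum_\alpha|h^\alpha v|^2\ge -\sum_\alpha|h^\alpha|^2\). The last term is handled by Cauchy--Schwarz on the \(\alpha\)-sum combined with the trace-free decomposition of each \(h^\alpha\).

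\textbf{Step 2 (the algebraic lemma, the main obstacle).} I want a uniform \(\delta=\delta(n,R)>0\) with \(\mathrm{Ric}_f\ge\delta g\) whenever \(S\le1\) and \(R>0\) is constant. A crude bound gives only \(\tfrac12-S+\tfrac12\sqrt{(R+S)S}\)-type expressions, and these may vanish at \(S=1\). So I plan to minimise \(\tfrac12-\sum_\alpha|h^\alpha v|^2+\tfrac12\sum_\alpha H^\alpha\langle h^\alpha v,v\rangle\) over all data with \(S\le 1\) and \(|H|^2-S=R\).

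First, rotate normal frames so that \(H\) is parallel to \(e_{n+1}\). For \(\alpha\ge n+2\) this gives \(H^\alpha=0\), so those components only subtract \(|h^\alpha v|^2\). The key observation I would aim to prove is that the worst case is a single direction \(v\) carrying the whole of \(S\). With \(|H|^2\ge R>0\), the term \(\tfrac12|H|\,h^{n+1}(v,v)\) must be at least of order \(R\) in that extremal configuration. The reason is that if \(|h v|^2\) is close to \(S\le 1\), then \(h^{n+1}(v,v)\) is close to \(\pm\sqrt S\), while the sign and trace constraint \(\mathrm{tr}\,h^{n+1}=|H|\) force the favourable sign. A compactness argument over the compact set of admissible \((h^\alpha,v)\) then gives \(\delta>0\), provided I show the minimum is strictly positive. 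Equality would require \(S=1\), \(R=0\), which is excluded. Establishing strict positivity over the whole compact set is the delicate part; I expect to split into the cases \(h^{n+1}(v,v)\ge0\) and \(h^{n+1}(v,v)<0\), and to use \(\sum_i h^{n+1}_{ii}=|H|\ge\sqrt R\) in the second case.

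\textbf{Step 3 (volume growth and conclusion).} With \(\mathrm{Ric}_f\ge\delta>0\), the Wei--Wylie comparison theorem gives finite weighted volume \(\int_M e^{-f}<\infty\). The Cheng--Zhou equivalence for complete self-shrinkers then yields polynomial volume growth. Cao--Li now applies. The hyperplane case has \(R=0\), so it is excluded, and hence \(S\equiv1\) and \(M=S^k(\sqrt k)\times\mathbb R^{n-k}\). Here \(R=|H|^2-S=k-1>0\) forces \(k\ge2\), so \(2\le k\le n\). This gives the sphere when \(k=n\) and the cylinders otherwise, in particular no genuinely higher-codimension examples.
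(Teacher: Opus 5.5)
Your Step 2 fails: the pointwise inequality you want is false, because \(f=|X|^2/4\) is the wrong weight for the normalization \(H=-X^\perp\). With this \(f\), the term \(\tfrac12\sum_\alpha H^\alpha h^\alpha\) does not cancel, and it can be negative and of order one.

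Here is a concrete example. Take \(p=1\), \(n\ge 7\), \(v=e_1\), and principal curvatures \(\lambda_1=-1/\sqrt2\), \(\lambda_2=\dots=\lambda_n=1/\sqrt{2(n-1)}\). Then:
\begin{itemize}
\item \(S=1\);
\item \(H=(\sqrt{n-1}-1)/\sqrt2>0\);
\item \(R=H^2-1=\tfrac12(\sqrt{n-1}-1)^2-1>0\), for instance \(R=\tfrac52-\sqrt6\) when \(n=7\);
\item \(\operatorname{Ric}_f(e_1,e_1)=\tfrac12-\lambda_1^2+\tfrac12H\lambda_1=-H/(2\sqrt2)<0\).
\end{itemize}
This data lies in your compact set of admissible \((h^\alpha,v)\). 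So for such \((n,R)\) the minimum you plan to compute is negative, and no \(\delta(n,R)>0\) exists.

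Each of your heuristics fails on this example. The bad direction is not one carrying all of \(S\); it carries half of \(S\), with sign opposite to \(H\). The trace constraint does not force the favourable sign, because the other \(n-1\) eigenvalues restore \(\operatorname{tr}h=H>0\) and \(|H|^2>S\). Finally, the minimum does not reach zero only at \(S=1\), \(R=0\).

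The fix is to use \(\phi=|X|^2/2\), which is the weight matched to \(H=-X^\perp\) and the one in Cheng--Zhou's equivalence for this normalization. Then \(\nabla^2\phi=g-\sum_\alpha H^\alpha h^\alpha\) cancels the \(H\)-term in the Gauss formula for \(\operatorname{Ric}\) exactly. This gives \(\operatorname{Ric}_\phi=g-B\) with \(B=\sum_\alpha (h^\alpha)^2\), so what you need is \(B\le(1-\delta)g\).

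For this bound your ``single direction'' intuition is exactly right, and this is the paper's route. Suppose \(\sum_\alpha|h^\alpha v|^2\ge 1-\varepsilon\); this quantity is at most \(S\le1\). Write \(h^\alpha\) in an orthonormal basis ending in \(v\), with blocks \(C^\alpha\) (on \(v^\perp\)), \(w^\alpha\) (off-diagonal) and \(\lambda^\alpha=h^\alpha(v,v)\). Then \(\sum_\alpha(|C^\alpha|^2+|w^\alpha|^2)\le\varepsilon\). Moreover
\(R=\sum_\alpha\bigl[2\lambda^\alpha\operatorname{tr}C^\alpha+(\operatorname{tr}C^\alpha)^2\bigr]-\sum_\alpha\bigl(|C^\alpha|^2+2|w^\alpha|^2\bigr)\le 2\sqrt{(n-1)\varepsilon}+(n-1)\varepsilon\).
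This bounds \(\varepsilon\) below by a constant depending only on \(n\) and \(R\). With that lemma in place, your Step 3 (Wei--Wylie, Cheng--Zhou, Cao--Li, and excluding \(k\le1\) via \(R=k-1>0\)) goes through as written.
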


\begin{remark}\label{remark-S}
The assumption \(S\le 1\) is essential in Theorem \ref{main}. In higher codimension, there exist product examples
\[
S^{n_1}(\sqrt{n_1})\times\cdots\times S^{n_m}(\sqrt{n_m})\times\mathbb R^l
\]
with \(m\ge 2\), whose squared norm of the second fundamental form is \(S=m>1\) and whose scalar curvature is a positive constant. Thus, a complete classification without the condition \(S\le 1\) must include such product examples and is a substantially larger problem.
\end{remark}

\begin{remark}\label{remark-flow}
Let us comment on the relevance of Theorem \ref{main} to the singularity analysis of the mean curvature flow. Suppose that a Type I singularity develops with a blow-up limit that is a complete self-shrinker. If this self-shrinker has positive constant scalar curvature and satisfies the pinching condition \(S\le1\), then Theorem \ref{main} asserts that it must be either a round sphere or a generalized cylinder. Consequently, no genuinely higher-codimensional singularity model can arise under these curvature assumptions. In codimension one, the condition \(S\le1\) is automatically satisfied when the scalar curvature is a positive constant. In higher codimension, it remains an open problem whether \(S\le1\) is also automatic. Therefore, Theorem \ref{main} applies to those singularity models for which this additional pinching holds, and it provides a restriction on the possible singularity profiles in such scenarios.
\end{remark}

The proof of Theorem \ref{main} consists of three ingredients:
\begin{enumerate}[(i)]
\item an algebraic lemma (Lemma \ref{alg} below) giving a uniform positive lower bound for the Bakry--\'Emery Ricci curvature;
\item the comparison theorem of Wei and Wylie \citep{WW}, which implies finiteness of the Gaussian volume;
\item the equivalence of finite Gaussian volume and polynomial volume growth due to Cheng and Zhou \citep{CZ}, together with the gap theorem of Cao and Li \citep{CL} in arbitrary codimension.
\end{enumerate}

\section{Preliminaries}\label{sec2}

Let \(X:M^n\to\mathbb R^{n+p}\) be an \(n\)-dimensional complete self-shrinker. Choose a local orthonormal frame \(\{e_1,\dots,e_n,e_{n+1},\dots,e_{n+p}\}\) such that \(e_1,\dots,e_n\) are tangent to \(M^n\), and let \(e_{n+1},\dots,e_{n+p}\) be normal vector fields. We use the index conventions
\[
1\le i,j,k,l\le n,\qquad n+1\le \alpha,\beta,\gamma\le n+p.
\]
The second fundamental form is
\[
A=\sum_{\alpha=n+1}^{n+p}A^\alpha e_\alpha,
\qquad
A^\alpha=(h^\alpha_{ij}),
\]
where \(h^\alpha_{ij}=\langle \nabla_{e_i}e_j,e_\alpha\rangle\). The mean curvature vector is
\[
H=\sum_{\alpha=n+1}^{n+p}H^\alpha e_\alpha,
\qquad
H^\alpha=\operatorname{tr}A^\alpha=\sum_i h^\alpha_{ii},
\]
and we set
\[
S=\sum_{\alpha=n+1}^{n+p}|A^\alpha|^2
=\sum_{\alpha,i,j}(h^\alpha_{ij})^2,
\qquad
|H|^2=\sum_{\alpha=n+1}^{n+p}(H^\alpha)^2.
\]
The Gauss equation gives the scalar curvature
\begin{equation}\label{gaussR}
R=|H|^2-S,
\end{equation}
and more precisely,
\begin{equation}\label{ricci}
R_{ij}
=
\sum_\alpha
\left(
H^\alpha h^\alpha_{ij}
-\sum_k h^\alpha_{ik}h^\alpha_{kj}
\right).
\end{equation}

For a smooth function \(f\) on \(M^n\), the \(\mathcal L\)-operator is defined by
\begin{equation}\label{Lop}
\mathcal L f=\Delta f-\langle X,\nabla f\rangle.
\end{equation}
The following Simons-type identities for self-shrinkers are standard (see, e.g., \citep{CLW1} and \citep{CLW2}):
\begin{equation}\label{simonsS}
\frac12\mathcal L S
=
\sum_{\alpha,i,j,k}(h^\alpha_{ijk})^2
+S(1-S),
\end{equation}
and
\begin{equation}\label{simonsR}
\frac12\mathcal L R
=
|\nabla H|^2
-\sum_{\alpha,i,j,k}(h^\alpha_{ijk})^2
+(1-S)R.
\end{equation}

Set \(\phi=|X|^2/2\). The Bakry--\'Emery Ricci tensor associated with \(\phi\) is defined by
\begin{equation}\label{BEdef}
\operatorname{Ric}_\phi=\operatorname{Ric}+\nabla^2\phi.
\end{equation}
A direct computation gives
\begin{equation}\label{BEformula}
(\operatorname{Ric}_\phi)_{ij}
=
\delta_{ij}
-\sum_{\alpha,k}h^\alpha_{ik}h^\alpha_{kj}.
\end{equation}
Define the symmetric matrix
\begin{equation}\label{Bdef}
B_{ij}=\sum_{\alpha,k}h^\alpha_{ik}h^\alpha_{kj}.
\end{equation}
Then
\begin{equation}\label{BEshort}
\operatorname{Ric}_\phi=I-B.
\end{equation}

We shall use the following theorems.

\begin{theorem}[Wei--Wylie {\citep{WW}}]\label{WW}
Let \((M^n,g)\) be a complete Riemannian manifold and let \(\phi\in C^\infty(M)\) satisfy
\[
\operatorname{Ric}_\phi=\operatorname{Ric}+\nabla^2\phi\ge \delta g
\]
for some constant \(\delta>0\). Then the Gaussian volume is finite:
\[
\int_M e^{-\phi}\,d\mu<\infty.
\]
\end{theorem}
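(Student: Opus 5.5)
The plan is to run a weighted volume comparison in geodesic polar coordinates around a fixed point \(o\in M\). The key quantity is the weighted mean curvature \(m_\phi=m-\partial_r\phi\) of distance spheres: we show it is driven down at rate \(\delta\) along every minimizing geodesic from \(o\). This forces the weighted area element \(e^{-\phi}\mathcal A\) to decay like \(e^{-\delta r^2/2}\), which is integrable.

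\emph{Setup and Riccati inequality.} Write \(d\mu=\mathcal A(r,\theta)\,dr\,d\theta\) for \(\theta\in S^{n-1}\subset T_oM\) and \(0<r<c(\theta)\), where \(c(\theta)\) is the distance to the cut point in direction \(\theta\). Set \(m=\partial_r\log\mathcal A=\Delta r\) and \(\mathcal A_\phi=e^{-\phi}\mathcal A\), so that \(\partial_r\log\mathcal A_\phi=m_\phi\). Along each geodesic \(\gamma_\theta\), the Bochner formula applied to \(r\) gives the Riccati inequality \(m'+m^2/(n-1)\le-\operatorname{Ric}(\partial_r,\partial_r)\). Since \(\partial_r^2\phi=\nabla^2\phi(\partial_r,\partial_r)\) along a geodesic, subtracting it yields
\[
m_\phi'\le -\operatorname{Ric}_\phi(\partial_r,\partial_r)-\frac{m^2}{n-1}\le-\delta .
\]
Hence \(m_\phi(t)\le m_\phi(s)-\delta(t-s)\) for all \(0<s<t<c(\theta)\).

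\emph{Uniform initial control, the main obstacle.} We need a bound on \(m_\phi\) near \(r=1\) that is uniform in \(\theta\). Pointwise this is not available, because \(\partial_r\phi\) is uncontrolled and there is no ordinary Ricci lower bound globally. I would average in \(s\) instead. For directions with \(c(\theta)>1\) and any \(t\ge1\), integrating the monotonicity inequality over \(s\in[1/2,1]\) gives
\[
m_\phi(t)\le 2\log\frac{\mathcal A_\phi(1,\theta)}{\mathcal A_\phi(1/2,\theta)}-\delta(t-1).
\]
On the compact ball \(\overline{B(o,1)}\), \(\phi\) is bounded and \(\operatorname{Ric}\ge-K\) for some \(K\ge0\). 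Bishop--Gromov comparison with the space form of curvature \(-K/(n-1)\) then bounds \(\mathcal A(1,\theta)/\mathcal A(1/2,\theta)\) above by a constant depending only on \(n\) and \(K\). Therefore the right-hand side above is at most \(C-\delta(t-1)\), with \(C\) independent of \(\theta\).

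\emph{Integration.} Integrating \(\partial_r\log\mathcal A_\phi=m_\phi\) from \(1\) to \(r<c(\theta)\) gives
\[
\mathcal A_\phi(r,\theta)\le \mathcal A_\phi(1,\theta)\,e^{C(r-1)-\delta(r-1)^2/2}.
\]
Here \(\mathcal A_\phi(1,\theta)\le\sup_{B(o,1)}e^{-\phi}\,\mathcal A(1,\theta)\), and \(\mathcal A(1,\theta)\) is bounded by the same Bishop comparison. The complement of the cut locus has full measure, so we can integrate over it. The part of the integral inside \(B(o,1)\) is finite by compactness. The outer part is bounded by
\[
C'\,|S^{n-1}|\int_1^\infty e^{C(r-1)-\delta(r-1)^2/2}\,dr<\infty .
\]
Hence \(\int_M e^{-\phi}\,d\mu<\infty\), as claimed in Theorem \ref{WW}.
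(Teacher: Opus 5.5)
Your argument is correct, and it is essentially the standard proof. The paper gives no proof of its own and only cites Wei--Wylie, whose comparison approach likewise integrates the weighted Riccati inequality $m_\phi'\le-\operatorname{Ric}_\phi(\partial_r,\partial_r)\le-\delta$ to get Gaussian decay of $e^{-\phi}\mathcal A$.

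Your averaging over $s\in[1/2,1]$ is valid but unnecessary. Your claim that no pointwise bound is available holds only globally, not at $r=1$. On the compact ball $\overline{B(o,1)}$ the gradient $|\nabla\phi|$ is also bounded, so Laplacian comparison gives directly $m_\phi(1,\theta)\le \bar m_K(1)+\sup_{\overline{B(o,1)}}|\nabla\phi|$, uniformly in $\theta$ with $c(\theta)>1$. Here $\bar m_K$ is the model-space mean curvature for $\operatorname{Ric}\ge-K$.
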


\begin{theorem}[Cheng--Zhou {\citep{CZ}}]\label{CZ}
For a complete self-shrinker, the following conditions are equivalent:
\begin{enumerate}[(i)]
\item finite Gaussian volume \(\int_M e^{-|X|^2/2}\,d\mu<\infty\);
\item properness;
\item Euclidean volume growth;
\item polynomial volume growth.
\end{enumerate}
\end{theorem}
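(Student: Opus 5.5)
The plan is to prove the cycle of implications (ii)\(\Rightarrow\)(iii)\(\Rightarrow\)(iv)\(\Rightarrow\)(i)\(\Rightarrow\)(ii). Here volume growth is measured by the volume of \(M\cap B_r(0)\) with \(B_r(0)\subset\mathbb R^{n+p}\), i.e.\ extrinsic balls. The computational tool throughout is the function \(f=|X|^2/4\). Using the self-shrinker equation \eqref{selfshrinker} and \(\Delta X=H\), it satisfies
\[
|\nabla f|^2=\tfrac14|X^T|^2\le f,\qquad \Delta f=\tfrac n2-\tfrac12|X^\perp|^2=\tfrac n2-f+|\nabla f|^2,\qquad \mathcal L f=\tfrac n2-f .
\]

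The implication (iii)\(\Rightarrow\)(iv) is trivial. For (iv)\(\Rightarrow\)(i), cut \(M\) into the shells \(\{k\le|X|<k+1\}\). Each shell has volume \(O(k^d)\) by polynomial growth, while the weight there is at most \(e^{-k^2/2}\), so the series converges.

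For (ii)\(\Rightarrow\)(iii), properness makes \(D_r=\{f<r\}\) relatively compact. Put \(V(r)=\operatorname{vol}(D_r)\) and \(\chi(r)=\int_{D_r}|\nabla f|^2\), and integrate the formula for \(\Delta f\) over \(D_r\). The divergence theorem and the coarea formula give
\[
\int_{\partial D_r}|\nabla f|=\tfrac n2V(r)-\int_{D_r}f+\chi(r),
\qquad
\int_{D_r}f=\int_0^r\bigl(V(r)-V(t)\bigr)dt ,
\]
together with \(\chi'(r)=\int_{\partial D_r}|\nabla f|\) and \(V'(r)=\int_{\partial D_r}|\nabla f|^{-1}\). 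Combining these with \(|\nabla f|^2\le r\) on \(\partial D_r\) yields \(rV'-\tfrac n2V\le \chi'-\chi\cdot(\text{const})\) up to lower-order terms. Rearranged, this gives an estimate of the form \((r^{-n/2}V)'\le C r^{-n/2-1}\chi\) with \(\chi\le rV\). A Gronwall-type integration then bounds \(V(r)\le Cr^{n/2}\), that is, \(\operatorname{vol}(M\cap B_{2\sqrt r}(0))\le C r^{n/2}\), which is Euclidean volume growth. This is a bookkeeping ODE argument. I expect it to go through, possibly after first proving a cruder polynomial bound and then bootstrapping.

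The main obstacle is (i)\(\Rightarrow\)(ii). Suppose \(M\) is not proper. Then there are points \(x_k\) with \(d_M(x_0,x_k)\to\infty\) but \(|X(x_k)|\le C\). I would contradict finiteness of the Gaussian volume with a uniform lower bound on the area near each \(x_k\). On \(\{|X|\le C+1\}\) we have \(|H|\le|X|\le C+1\). The monotonicity formula for submanifolds with bounded mean curvature should therefore give, for the connected component \(\Omega_k\) of \(X^{-1}(B_1(X(x_k)))\) containing \(x_k\), the bound \(\operatorname{vol}(\Omega_k)\ge c(n,C)>0\). On \(\Omega_k\) the weight \(e^{-|X|^2/2}\) is bounded below by \(e^{-(C+1)^2/2}\).

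The delicate point is disjointness. After passing to a subsequence with intrinsic distances growing fast, one must argue that infinitely many of the \(\Omega_k\) are pairwise disjoint. Otherwise a single component contains points at arbitrarily large intrinsic distance while staying in a bounded region of \(\mathbb R^{n+p}\). In that case I would instead place infinitely many disjoint small intrinsic balls along a path inside that component. I would apply monotonicity at scale \(r\) small to each of them, with the component of \(X^{-1}(B_r)\) trapped inside the intrinsic ball of radius comparable to \(r\) by a covering/selection argument. Either way the Gaussian volume is infinite, which contradicts (i). If this disjointness step resists, my fallback is a weighted-volume argument: use \(\mathcal L f=\tfrac n2-f\) on a large intrinsic ball with cutoff to show that bounded \(f\) on an intrinsically unbounded set forces infinite weighted measure.
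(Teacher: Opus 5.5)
The paper gives no proof of this statement; it imports it from Cheng--Zhou, so your outline has to stand on its own. The links (iii)\(\Rightarrow\)(iv)\(\Rightarrow\)(i) are fine, with extrinsic balls as you specify. Both substantive links, however, have genuine gaps.

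\emph{(i)\(\Rightarrow\)(ii).} The disjointness step is not a technicality, and neither of your proposed fixes works as stated.
The component \(\Omega_k\) of \(X^{-1}(B_1(X(x_k)))\) through \(x_k\) can have large intrinsic diameter, for instance many sheets joined inside a bounded region. So distinct \(\Omega_k\) may overlap.
Without control of the second fundamental form, there is also no uniform scale \(r\) at which the component of \(X^{-1}(B_r(X(z)))\) through \(z\) lies in an intrinsic ball of radius comparable to \(r\). If that scale degenerates along your points, the monotonicity bound \(c\,r^n\) degenerates with it.

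The missing idea is to use intrinsic balls from the start, where disjointness is free.
\begin{enumerate}
\item Since \(|X(z)-X(x_k)|\le d_M(z,x_k)\), on \(B^M_1(x_k)\) you have \(|X|\le C+1\) and \(|H|=|X^\perp|\le C+1\).
\item Put \(y=X(x_k)\). Integrate \(\Delta_M|X-y|^2=2n+2\langle H,X-y\rangle\ge 2n-2(C+1)r\) over \(B^M_r(x_k)\), using \(|X-y|\le r\) on its boundary. With \(V(r)=\operatorname{vol}(B^M_r(x_k))\) this gives \(rV'(r)\ge (n-(C+1)r)V(r)\), hence \(V(1)\ge\omega_n e^{-(C+1)}\). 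The Michael--Simon inequality gives the same kind of bound.
\item Choose the \(x_k\) pairwise at intrinsic distance at least \(2\). The balls \(B^M_1(x_k)\) are then disjoint, and each contributes at least \(\omega_n e^{-(C+1)-(C+1)^2/2}\) to \(\int_M e^{-|X|^2/2}\,d\mu\), a contradiction.
\end{enumerate}
This route also avoids having to justify the monotonicity formula for a non-proper immersion.

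\emph{(ii)\(\Rightarrow\)(iii).} First, a normalization slip: with \(H=-X^\perp\) and \(f=|X|^2/4\) one has \(\Delta f=\tfrac n2-2(f-|\nabla f|^2)\) and \(\mathcal L f=\tfrac n2-2f\). Your identities belong to the convention \(H=-X^\perp/2\).

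More seriously, the bookkeeping does not close.
\begin{enumerate}
\item The bound \(|\nabla f|^2\le r\) on \(\partial D_r\) only gives \(\int_{\partial D_r}|\nabla f|\le rV'(r)\). That is a lower bound on \(V'\), the wrong direction for bounding \(V\).
\item Even granting \((r^{-n/2}V)'\le Cr^{-n/2-1}\chi\) with \(\chi\le rV\), Gronwall yields only \(V\le Cr^{n/2}e^{Cr}\). So there is no crude polynomial bound to bootstrap from.
\end{enumerate}

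What is needed is the Cao--Zhou mechanism. Replace \(\int_{D_r}|\nabla f|^2\) by \(\chi(r)=\int_{D_r}\psi\), where \(\psi=f-|\nabla f|^2=|X^\perp|^2/4\ge 0\).
\begin{enumerate}
\item On \(\partial D_r\) one has exactly \(|\nabla f|^2=r-\psi\). Combined with the divergence theorem, this gives \(rV'-\tfrac n2V=\chi'-2\chi\).
\item From \(\int_{\partial D_r}|\nabla f|=\tfrac n2V-2\chi\ge 0\) you get \(\chi\le\tfrac n4V\).
\item Integrate \((r^{-n/2}V)'=r^{-n/2-1}(\chi'-2\chi)\) by parts. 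The remaining integrand \(\bigl((\tfrac n2+1)s^{-1}-2\bigr)s^{-n/2-1}\chi(s)\) is nonpositive for large \(s\).
\item The boundary term satisfies \(r^{-n/2-1}\chi(r)\le\tfrac n4 r^{-1}\cdot r^{-n/2}V(r)\), so it can be absorbed into the left side. This gives \(V(r)\le Cr^{n/2}\).
\end{enumerate}
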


\begin{theorem}[Cao--Li {\citep{CL}}]\label{CL}
Let \(X:M^n\to\mathbb R^{n+p}\) be a complete self-shrinker with polynomial volume growth. If \(S\le 1\), then \(S\equiv 1\) and \(X\) is isometric to
\[
S^k(\sqrt k)\times\mathbb R^{n-k},
\qquad 1\le k\le n,
\]
or \(\mathbb R^n\).
\end{theorem}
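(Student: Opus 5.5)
The natural route is the classical integral method: integrate the Simons-type identity \eqref{simonsS} against the Gaussian weight, which turns the pointwise hypothesis \(S\le 1\) into rigidity.

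\textbf{Step 1: integration.} The operator \(\mathcal L\) in \eqref{Lop} is self-adjoint with respect to \(e^{-|X|^2/2}d\mu\). By Theorem \ref{CZ}, polynomial volume growth is equivalent to finite Gaussian volume and properness. Hence for bounded \(S\) (here \(0\le S\le1\)) the identity
\[
\int_M \mathcal L S\, e^{-|X|^2/2}\,d\mu=0
\]
holds. To justify it, multiply by a cutoff \(\eta_r\) supported in \(\{|X|\le 2r\}\) with \(|\nabla\eta_r|\le C/r\). Integrate by parts and control the boundary term
\[
\int |\nabla S|\,|\nabla\eta_r|\,e^{-|X|^2/2}.
\]
This needs a weighted \(L^2\) bound on \(\nabla S\), equivalently on \(|\nabla A|\). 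I would obtain it first by testing \eqref{simonsS} against \(\eta_r^2\) (a Caccioppoli-type step), using the finiteness of the weighted volume.

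\textbf{Step 2: the dichotomy.} From \eqref{simonsS} we then get
\[
\int_M\Bigl(\sum(h^\alpha_{ijk})^2+S(1-S)\Bigr)e^{-|X|^2/2}\,d\mu=0 .
\]
Both terms are nonnegative since \(S\le1\), so \(\nabla A\equiv0\) and \(S(1-S)\equiv0\). By continuity and connectedness, either \(S\equiv0\) or \(S\equiv1\). If \(S\equiv0\), then \(M\) is totally geodesic, and \(H=-X^\perp=0\) forces it to be an \(n\)-plane through the origin, namely \(\mathbb R^n\). (If one works instead with the sharper general-codimension form \(\tfrac12\mathcal L S\ge|\nabla A|^2+S(1-\tfrac32 S)\), the same argument goes through, and one also records the equality cases of the Li--Li matrix inequality.)

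\textbf{Step 3: the case \(S\equiv1\) with \(\nabla A=0\).} Differentiating \(H=-X^\perp\) gives
\[
\nabla^\perp_iH^\alpha=\sum_j h^\alpha_{ij}\langle X,e_j\rangle .
\]
Since \(A\) is parallel, so is \(H\), hence \(A(X^\top,\cdot)=0\). Differentiating once more and using \(\nabla A=0\) yields
\[
h^\alpha_{ik}=\sum_{\beta,j}H^\beta h^\alpha_{ij}h^\beta_{jk}.
\]
Thus the operator \(T=\sum_\beta H^\beta A^\beta\) acts as the identity on \(V=\sum_\alpha\operatorname{Im}A^\alpha\), and \(V^\perp=\bigcap_\alpha\ker A^\alpha\). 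Both distributions are parallel because \(A\) is parallel, and by \eqref{ricci} the factor \(V^\perp\) is flat. By de Rham, \(M\) splits as \(N^k\times\mathbb R^{n-k}\), with the Euclidean factor in a fixed direction, and the self-shrinker equation splits accordingly.

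\textbf{Step 4: identifying \(N^k\) (the main obstacle).} It remains to show that \(N^k\subset\mathbb R^{k+p}\), a self-shrinker with parallel second fundamental form and \(T=I\) on \(TN\), is the round sphere \(S^k(\sqrt k)\). Since \(|X^\perp|=|H|\) is constant and \(X^\top\in V^\perp\) vanishes on \(N\), the factor \(N\) lies in a sphere of radius \(|H|\). The relation \(T=I\) says that the shape operator in the direction \(H/|H|\) equals \(|H|^{-1}I\), so \(N\) is umbilic in that direction. The remaining components of \(A\) give a minimal immersion of \(N\) into the sphere, and the task is to show they vanish. To do this, I would use \(\operatorname{tr}T=|H|^2=k\) together with
\[
S=1=\bigl|A^{H/|H|}\bigr|^2+\sum_{\text{others}}|A^\alpha|^2=\frac{k}{|H|^2}+\cdots .
\]
This forces every other component to vanish. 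Hence \(N\) is totally umbilic, so \(N=S^k(\sqrt k)\), and we obtain \(S^k(\sqrt k)\times\mathbb R^{n-k}\).

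I expect the delicate points to be the weighted-integrability justification in Step 1 and making this codimension-reduction count in Step 4 fully rigorous.
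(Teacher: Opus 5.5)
There is a genuine gap in Step 2. The identity \eqref{simonsS}, although it is displayed in Section~\ref{sec2} with no restriction on $p$, is the hypersurface Simons identity. For a self-shrinker of codimension $p\ge 2$ the correct formula is
\[
\tfrac12\mathcal L S=|\nabla A|^2+S-\sum_{\alpha,\beta}\sigma_{\alpha\beta}^2-\sum_{\alpha,\beta}\bigl|A^\alpha A^\beta-A^\beta A^\alpha\bigr|^2,
\qquad \sigma_{\alpha\beta}=\sum_{i,j}h^\alpha_{ij}h^\beta_{ij}.
\]
The bound $\sum_{\alpha,\beta}\sigma_{\alpha\beta}^2\le S^2$ holds, but the normal-curvature (commutator) term has the wrong sign. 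Your parenthetical fallback does not rescue this. The Li--Li inequality only gives the reaction term $S(1-\tfrac32 S)$, which is weaker than $S(1-S)$, not sharper. This term is negative for $2/3<S\le 1$, and the bound is attained pointwise, e.g.\ by $A^{n+1}=\lambda\begin{pmatrix}0&1\\1&0\end{pmatrix}\oplus 0$ and $A^{n+2}=\lambda\begin{pmatrix}1&0\\0&-1\end{pmatrix}\oplus 0$. So integrating $\mathcal L S$ against the Gaussian weight gives rigidity only under $S\le 2/3$. Under $S\le 1$ it yields neither $\nabla A\equiv0$ nor $S(1-S)\equiv0$, and your Steps 3--4, which start from these two facts, have no input.

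The paper simply cites this theorem, and the Cao--Li proof behind that citation avoids the problem. The missing idea is to run your weighted integration on $|H|^2$ instead of $S$. Differentiate $H=-X^\perp$ twice as in your Step 3, but without assuming $\nabla A=0$, and use Codazzi. This gives $\mathcal L H^\alpha=H^\alpha-\sum_\beta\sigma_{\alpha\beta}H^\beta$, hence in every codimension
\[
\tfrac12\mathcal L|H|^2=|\nabla^\perp H|^2+|H|^2-\sum_{i,j}\langle A(e_i,e_j),H\rangle^2\ge|\nabla^\perp H|^2+|H|^2(1-S)\ge 0 .
\]
Since $|H|^2\le nS\le n$, your cutoff argument from Step 1 applies. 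You obtain $\nabla^\perp H\equiv0$ and pointwise equality in the Cauchy--Schwarz step, which leaves two cases.

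\textbf{Case $H\equiv 0$.} Then $X^\perp\equiv0$, so $M$ is a smooth complete cone, hence a plane $\mathbb R^n$.

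\textbf{Case $H\ne 0$.} Then $|H|$ is a nonzero constant, $S\equiv1$, and every $A(e_i,e_j)$ is a multiple of the parallel unit normal $\nu=H/|H|$. Hence $TM\oplus\mathbb R\nu$ is a constant $(n+1)$-dimensional subspace containing $X$, and $M$ is a hypersurface self-shrinker in it. There \eqref{simonsS} is legitimately available, so $S\equiv 1$ forces $\nabla A\equiv0$. Your Steps 3--4, or Lawson's classification, then finish the proof.
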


\section{An algebraic lemma}\label{sec3}

The following lemma is the key new ingredient of this paper. It provides a uniform positive lower bound for \(\operatorname{Ric}_\phi\) in arbitrary codimension and arbitrary \(S\le 1\) with \(R\) bounded below by a positive constant.

\begin{lemma}\label{alg}
Let \(A^\alpha\), \(\alpha=1,\dots,p\), be symmetric \(n\times n\) matrices. Put
\[
S=\sum_{\alpha=1}^p|A^\alpha|^2,
\qquad
H^\alpha=\operatorname{tr}A^\alpha,
\qquad
R=\sum_{\alpha=1}^p(H^\alpha)^2-S.
\]
Assume that
\[
S\le 1,
\qquad
R\ge r_0>0
\]
for some constant \(r_0>0\). Then there exists a constant \(\delta=\delta(n,r_0)>0\) such that
\[
\sum_{\alpha=1}^p (A^\alpha)^2\le (1-\delta)I.
\]
\end{lemma}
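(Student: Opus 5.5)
The plan is to bound the largest eigenvalue of the symmetric positive semidefinite matrix \(B=\sum_\alpha (A^\alpha)^2\) directly, by showing that an eigenvalue close to \(1\) forces every \(A^\alpha\) to be nearly rank one in a common direction, and that this in turn forces \(R\) to be small. Fix a unit vector \(v\) and set \(\lambda=v^TBv=\sum_\alpha |A^\alpha v|^2\). Complete \(v=e_1\) to an orthonormal basis \(e_1,\dots,e_n\) and write \(h^\alpha_{ij}\) for the entries of \(A^\alpha\) in this basis. Then
\[
\lambda=\sum_{\alpha,j}(h^\alpha_{1j})^2 .
\]
Put \(\varepsilon=1-\lambda\ge 0\); this is nonnegative because \(\operatorname{tr}B=S\le 1\) and \(B\ge 0\). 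We will show \(\varepsilon\ge\delta\) with an explicit \(\delta(n,r_0)\).

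The first step is to localize the mass of the second fundamental form. Since \(S=\sum_{\alpha,i,j}(h^\alpha_{ij})^2\le 1\), the entries in rows \(i\ge 2\) satisfy
\[
\sum_\alpha\sum_{i\ge2}\sum_j (h^\alpha_{ij})^2=S-\lambda\le\varepsilon .
\]
In particular the block \(\sum_{\alpha}\sum_{i,j\ge 2}(h^\alpha_{ij})^2\le\varepsilon\). Set \(a_\alpha=h^\alpha_{11}\) and \(t_\alpha=\sum_{i\ge2}h^\alpha_{ii}\). By Cauchy--Schwarz, \(t_\alpha^2\le (n-1)\sum_{i\ge2}(h^\alpha_{ii})^2\), so
\[
|t|^2=\sum_\alpha t_\alpha^2\le (n-1)\varepsilon .
\]

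The second step estimates \(R\) from above. Since \(H^\alpha=a_\alpha+t_\alpha\) and \(S\ge\sum_\alpha a_\alpha^2=|a|^2\), we get
\[
R=|a+t|^2-S\le |a+t|^2-|a|^2=2\langle a,t\rangle+|t|^2\le 2|a||t|+|t|^2 .
\]
Using \(|a|^2\le S\le1\), this gives
\[
r_0\le R\le 2|t|+|t|^2\le 2\sqrt{(n-1)\varepsilon}+(n-1)\varepsilon=\bigl(1+\sqrt{(n-1)\varepsilon}\bigr)^2-1 .
\]
Solving for \(\varepsilon\) yields
\[
\varepsilon\ge\delta:=\frac{(\sqrt{1+r_0}-1)^2}{n-1}>0 .
\]
This is independent of \(p\) and of \(v\), so \(v^TBv\le 1-\delta\) for every unit \(v\), which is the claim.

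The only point needing care is the middle step, namely that the trace of the complementary block and the off-diagonal row mass are both controlled by \(S-\lambda\). This uses the symmetry of each \(A^\alpha\) only mildly; we just discard the first row. No real obstacle is expected beyond making sure \(|a|\le1\) is used to absorb the cross term. One may note that \(\delta<1\) is automatic, since \(r_0\le R\le (n-1)S\le n-1\) by the algebraic inequality \(|H|^2\le nS\). The estimate is also consistent with the cylinders, where \(R=k-1\).
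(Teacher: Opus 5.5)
Your proposal is correct and follows essentially the same route as the paper: you put \(v\) into an orthonormal basis, bound the mass of the rows other than the \(v\)-row (the paper's \(C^\alpha\) and \(w^\alpha\)) by \(\varepsilon=1-v^TBv\), control the trace of the complementary block by Cauchy--Schwarz, and arrive at the same estimate \(r_0\le R\le 2\sqrt{(n-1)\varepsilon}+(n-1)\varepsilon\). The differences are cosmetic: you argue directly rather than by contradiction, and you solve this inequality exactly, which gives the cleaner (and larger) constant \(\delta=(\sqrt{1+r_0}-1)^2/(n-1)\) in place of the paper's \(\bigl(r_0/(2C_n)\bigr)^2\).
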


\begin{proof}
Suppose, to the contrary, that for every \(\varepsilon>0\) there exists a unit vector \(v\in\mathbb R^n\) such that
\begin{equation}\label{contrary}
v^T\left(\sum_{\alpha=1}^p (A^\alpha)^2\right)v\ge 1-\varepsilon.
\end{equation}
We shall show that this forces \(\varepsilon\ge c(n,r_0)>0\).

Let \(B=\sum_{\alpha=1}^p (A^\alpha)^2\). By \eqref{contrary},
\[
1-\varepsilon
\le
v^T B v
=
\sum_{\alpha=1}^p |A^\alpha v|^2
\le
\sum_{\alpha=1}^p |A^\alpha|^2
=
S
\le 1.
\]
Hence
\begin{equation}\label{Sclose}
1-\varepsilon\le S\le 1,
\end{equation}
and
\begin{equation}\label{smalltail}
\sum_{\alpha=1}^p
\left(|A^\alpha|^2-|A^\alpha v|^2\right)
\le \varepsilon.
\end{equation}

For each \(\alpha\), decompose
\[
A^\alpha v=\lambda^\alpha v+w^\alpha,
\qquad
w^\alpha\perp v,
\]
where \(\lambda^\alpha=v^T A^\alpha v\). Let \(C^\alpha\) denote the restriction of \(A^\alpha\) to \(v^\perp\). In an orthonormal basis whose last vector is \(v\),
\[
A^\alpha=
\begin{pmatrix}
C^\alpha & w^\alpha\\
(w^\alpha)^T & \lambda^\alpha
\end{pmatrix}.
\]
Therefore
\begin{equation}\label{Fnorm}
|A^\alpha|^2
=
|C^\alpha|^2
+2|w^\alpha|^2
+(\lambda^\alpha)^2,
\end{equation}
and
\begin{equation}\label{trace}
\operatorname{tr}A^\alpha
=
\operatorname{tr}C^\alpha+\lambda^\alpha.
\end{equation}
Combining \eqref{Fnorm} and \eqref{smalltail}, we obtain
\begin{equation}\label{smallC}
\sum_{\alpha=1}^p
\left(
|C^\alpha|^2
+|w^\alpha|^2
\right)
\le \varepsilon.
\end{equation}

Using \eqref{trace} and \eqref{Fnorm},
\[
\begin{aligned}
R
&=\sum_{\alpha=1}^p(H^\alpha)^2-S\\
&=
2\sum_{\alpha=1}^p\lambda^\alpha\operatorname{tr}C^\alpha
+\sum_{\alpha=1}^p(\operatorname{tr}C^\alpha)^2
-2\sum_{\alpha=1}^p|w^\alpha|^2
-\sum_{\alpha=1}^p|C^\alpha|^2.
\end{aligned}
\]
Dropping the negative terms and using \(|\operatorname{tr}C^\alpha|\le \sqrt{n-1}\,|C^\alpha|\), we obtain
\[
R
\le C_n(\sqrt{\varepsilon}+\varepsilon),
\]
where \(C_n=2\sqrt{n-1}+(n-1)\) depends only on \(n\). Since \(R\ge r_0>0\),
\[
\varepsilon\ge \left(\frac{r_0}{2C_n}\right)^2=:\delta(n,r_0)>0.
\]
Hence for every unit vector \(v\),
\[
v^T\left(\sum_{\alpha=1}^p (A^\alpha)^2\right)v\le 1-\delta(n,r_0),
\]
which implies
\[
\sum_{\alpha=1}^p (A^\alpha)^2\le (1-\delta(n,r_0))I.
\]
\end{proof}

\begin{remark}\label{rem-delta}
The constant \(\delta(n,r_0)\) obtained in the proof is explicit:
\[
\delta(n,r_0)
=
\left(
\frac{r_0}
{2\bigl(2\sqrt{n-1}+(n-1)\bigr)}
\right)^2.
\]
In particular, \(\delta\) depends only on \(n\) and \(r_0\), and is independent of the codimension \(p\) and of the point on \(M^n\).
\end{remark}

\section{Lower bound for the Bakry--\'Emery Ricci curvature}\label{sec4}

\begin{proposition}\label{prop}
Let \(X:M^n\to\mathbb R^{n+p}\) be an \(n\)-dimensional complete self-shrinker. Suppose that the scalar curvature \(R\) is a positive constant and that \(S\le 1\). Then there exists a constant \(\delta=\delta(n,R)>0\) such that
\[
\operatorname{Ric}_\phi\ge \delta g,
\qquad
\phi=\frac{|X|^2}{2}.
\]
\end{proposition}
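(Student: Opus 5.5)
The plan is to combine the pointwise formula \eqref{BEshort} with Lemma \ref{alg}, applied at each point of $M^n$.

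Fix $x\in M^n$ and a local orthonormal frame as in Section \ref{sec2}. The second fundamental form at $x$ then gives symmetric $n\times n$ matrices $A^\alpha=(h^\alpha_{ij}(x))$, $\alpha=n+1,\dots,n+p$; relabel them $1,\dots,p$. By the definitions in Section \ref{sec2}, the quantities $S$, $H^\alpha$ and $R$ computed from these matrices agree with the geometric ones. Indeed, the Gauss equation \eqref{gaussR} says $R=|H|^2-S$, which is exactly the algebraic $R$ in Lemma \ref{alg}.

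By hypothesis, $S(x)\le 1$ and $R(x)=R$, a positive constant. So Lemma \ref{alg} applies with $r_0:=R$. It gives $\delta=\delta(n,R)>0$, explicit by Remark \ref{rem-delta} and independent of $x$ and $p$, such that
\[
\sum_\alpha (A^\alpha)^2\le(1-\delta)I .
\]
The matrix on the left is exactly $B=(B_{ij})$ from \eqref{Bdef}, since $\sum_k h^\alpha_{ik}h^\alpha_{kj}$ is the $(i,j)$ entry of $(A^\alpha)^2$.

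Then \eqref{BEshort} gives $\operatorname{Ric}_\phi=I-B\ge \delta I$ in the orthonormal frame, that is, $\operatorname{Ric}_\phi\ge \delta g$ at $x$. Since $x$ is arbitrary and $\delta$ depends only on $n$ and $R$, the bound holds on all of $M^n$.

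There is no serious obstacle; the one point to check is uniformity. The constancy of $R$ is what makes $r_0$ the same at every point, so a single $\delta$ works globally, as Theorem \ref{WW} requires. It would also suffice to have $R\ge r_0>0$ everywhere. One should also check that \eqref{BEformula} is correct, so that the reduction to $B$ is valid. It follows from $\nabla^2\phi=\delta_{ij}+\langle X,h_{ij}\rangle$, the self-shrinker equation $\langle X,e_\alpha\rangle=-H^\alpha$, and \eqref{ricci}: the $H^\alpha h^\alpha_{ij}$ terms cancel. Since the paper states \eqref{BEformula} as a direct computation, we take it as given.
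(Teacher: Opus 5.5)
Your proposal is correct and follows the same route as the paper. You apply Lemma \ref{alg} pointwise with $r_0=R$, where the constancy of $R$ makes $\delta(n,R)$ uniform on $M^n$. You identify $\sum_\alpha (A^\alpha)^2$ with $B$ and conclude from \eqref{BEshort} that $\operatorname{Ric}_\phi=I-B\ge\delta g$. Your check of \eqref{BEformula}, using $\langle X,e_\alpha\rangle=-H^\alpha$ and the cancellation against \eqref{ricci}, is a useful addition that the paper leaves as a direct computation.
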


\begin{proof}
Since \(R\) is a positive constant, there exists \(r_0>0\) such that \(R\ge r_0\) everywhere. By Lemma \ref{alg},
\[
B=\sum_{\alpha=1}^p (A^\alpha)^2\le (1-\delta)I
\]
for some \(\delta=\delta(n,r_0)>0\). By \eqref{BEshort},
\[
\operatorname{Ric}_\phi=I-B\ge \delta I.
\]
\end{proof}

\begin{remark}
The key point of Proposition \ref{prop} is that the lower bound is uniform on \(M^n\) and independent of the codimension \(p\). This is precisely what allows us to apply the Wei--Wylie comparison theorem without assuming polynomial volume growth.
\end{remark}

\section{Proof of the main theorem}\label{sec5}

\begin{proof}[Proof of Theorem \ref{main}]
By Proposition \ref{prop}, the Bakry--\'Emery Ricci curvature satisfies
\[
\operatorname{Ric}_\phi\ge \delta g
\]
for some constant \(\delta>0\). By Theorem \ref{WW}, the Gaussian volume is finite:
\[
\int_M e^{-|X|^2/2}\,d\mu<\infty.
\]
By Theorem \ref{CZ}, finite Gaussian volume is equivalent to polynomial volume growth. Hence \(X:M^n\to\mathbb R^{n+p}\) has polynomial volume growth.

Since \(S\le 1\), Theorem \ref{CL} implies that
\[
S\equiv 1
\]
and \(X\) is isometric to
\[
S^k(\sqrt k)\times\mathbb R^{n-k},
\qquad 1\le k\le n,
\]
or \(\mathbb R^n\). Because \(R\) is a positive constant, the cases \(\mathbb R^n\) and \(S^1(1)\times\mathbb R^{n-1}\) are excluded. Therefore
\[
X\simeq S^n(\sqrt n)
\quad\text{or}\quad
S^k(\sqrt k)\times\mathbb R^{n-k},
\qquad 2\le k\le n-1.
\]
\end{proof}

\begin{remark}
The proof shows that the assumptions \(R>0\) constant and \(S\le 1\) force the self-shrinker to have polynomial volume growth, even if this was not assumed a priori. This is the main novelty compared to the codimension-one result in \citep{CLW2}, where the analysis of the residual case \(S<1\), \(\sup S=1\) required a separate argument.
\end{remark}

\section{Concluding remarks}\label{sec6}

We have extended the classification of complete self-shrinkers with positive constant scalar curvature and \(S\le 1\) from codimension one to arbitrary codimension. The key new ingredient is the algebraic Lemma \ref{alg}, which yields a uniform positive lower bound for the Bakry--\'Emery Ricci curvature in arbitrary codimension. Combined with the comparison theorem of Wei and Wylie, this gives finite Gaussian volume and hence polynomial volume growth, after which the gap theorem of Cao and Li applies.

Two natural questions remain open:
\begin{enumerate}[(i)]
\item Can the assumption \(S\le 1\) be removed? As pointed out in Remark \ref{remark-S}, in higher codimension there exist product examples with \(S>1\) and positive constant scalar curvature, so a complete classification must include such examples.
\item Can one classify complete self-shrinkers with negative or sign-changing scalar curvature? The algebraic Lemma \ref{alg} uses the positivity of \(R\) in an essential way, and the negative case requires a different approach.
\end{enumerate}

\section*{Acknowledgements}
The author is supported by the National Natural Science Foundation of China (Grant No.~12261105).

\end{document}